\def\frk{\mathfrak}               
\def\mm{{\frk m}}
\def\Phi{{\frk N}}
\def\opn#1#2{\def#1{\operatorname{#2}}} 
\opn\chara{char} 
\opn\length{\ell} 
\opn\pd{pd} 
\opn\rk{rk}
\opn\projdim{proj\,dim} 
\opn\injdim{inj\,dim} 
\opn\rank{rank}
\opn\depth{depth} 
\opn\grade{grade} 
\opn\height{height}
\opn\embdim{emb\,dim} 
\opn\codim{codim}
\opn\Tr{Tr} 
\opn\bigrank{big\,rank}
\opn\superheight{superheight}
\opn\lcm{lcm}
\opn\trdeg{tr\,deg}
	\opn\reg{reg} 
	\opn\del{del} 
	\opn\ini{in} 
	\opn\Mon{Mon}
	\opn\size{size}
	\opn\mult{mult}
	\opn\lk{lk}
	\opn\cone{cone}
	\opn\lex{lex}
	\opn\rev{rev}
	\opn\div{div} \opn\Div{Div} \opn\cl{cl} \opn\Cl{Cl}
	\opn\Spec{Spec} \opn\Supp{Supp} \opn\supp{supp} \opn\m{m}
	\opn\Ass{Ass} \opn\Min{Min}
	\opn\Ann{Ann} \opn\Rad{Rad} \opn\Soc{Soc}
	\opn\Syz{Syz} \opn\Im{Im} \opn\Ker{Ker} \opn\Coker{Coker}
	\opn\Am{Am} \opn\Hom{Hom} \opn\Tor{Tor} \opn\Ext{Ext}
	\opn\End{End} \opn\Aut{Aut} \opn\id{id} \opn\ini{in}
	\opn\nat{nat}
	\opn\pff{pf}
	\opn\Pf{Pf} \opn\GL{GL} \opn\SL{SL} \opn\mod{mod} \opn\ord{ord}
	\opn\Gin{Gin}
	\opn\Hilb{Hilb}\opn\adeg{adeg}\opn\std{std}\opn\ip{infpt}
	\opn\Pol{Pol}
	\opn\sat{sat}
	\opn\Var{Var}
	\opn\Gen{Gen}
	\opn\aff{aff} \opn\con{conv} \opn\relint{relint} \opn\st{st}
	\opn\lk{lk} \opn\cn{cn} \opn\core{core} \opn\vol{vol}
	\opn\link{link} \opn\star{star}
	\opn\gr{gr}
	\def\pot#1#2{#1[\kern-0.28ex[#2]\kern-0.28ex]}
	\opn\dirlim{\underrightarrow{\lim}}
	\opn\inivlim{\underleftarrow{\lim}}
	\let\Dirsum=\bigoplus
	\let\to=\rightarrow
	\def\Implies{\ifmmode\Longrightarrow \else
		\unskip${}\Longrightarrow{}$\ignorespaces\fi}
	\def\implies{\ifmmode\Rightarrow \else
		\unskip${}\Rightarrow{}$\ignorespaces\fi}
	\def\iff{\ifmmode\Longleftrightarrow \else
		\unskip${}\Longleftrightarrow{}$\ignorespaces\fi}
	\newtheorem{Theorem}{Theorem}[section]
	\newtheorem{Corollary}[Theorem]{Corollary}
	\newtheorem{Proposition}[Theorem]{Proposition}
	\let\epsilon\varepsilon
	\let\phi=\varphi
	\let\kappa=\varkappa
	\def\qed{\ifhmode\textqed\fi
		\ifmmode\ifinner\quad\qedsymbol\else\dispqed\fi\fi}
	\def\textqed{\unskip\nobreak\penalty50
		\hskip2em\hbox{}\nobreak\hfil\qedsymbol
		\parfillskip=0pt \finalhyphendemerits=0}
	\def\dispqed{\rlap{\qquad\qedsymbol}}
	\opn\SMon{SMon}
	\opn\height{height}
	\opn\dist{dist}
	\opn\supp{supp}
	\def\pnt{{\raise0.5mm\hbox{\large\bf.}}}
	\opn\Lex{Lex}
\begin{document}
		
\title{Normal Rees algebras arising from vertex decomposable simplicial complexes}

\author{Somayeh Moradi}

\address{Somayeh Moradi, Department of Mathematics, Faculty of Science, Ilam University,
	P.O.Box 69315-516, Ilam, Iran}
\email{so.moradi@ilam.ac.ir}

\dedicatory{ }
\keywords{Rees algebra, normal ring, vertex decomposable, cover ideal of a graph}
\subjclass[2010]{Primary 13F55; Secondary 13A30, 05E40}
\thanks{ The author is supported by the Alexander von Humboldt Foundation.}

\begin{abstract}
We show that for a vertex decomposable simplicial complex $\Delta$, the Rees algebra of $I_{\Delta^{\vee}}$ is a normal Cohen-Macaulay domain. As consequences, we show that any squarefree weakly polymatroidal ideal is normal and we obtain normal ideals among several interesting families of monomial ideals such as  cover ideals of graphs and edge ideals of hypergraphs. Moreover, based on a construction on simplicial complexes given by Biermann and Van Tuyl~\cite{BV}, we present families of normal ideals attached to any squarefree monomial ideal.   
\end{abstract}

\maketitle

\setcounter{tocdepth}{1}
\section{Introduction}		

Let $S=K[x_1,\ldots,x_n]$ be a polynomial ring over a field $K$, and let $I\subset S$ be a nonzero monomial ideal. The {\em Rees algebra} of $I$ is the subring of the polynomial ring $S[t]$ given by $\mathcal{R}(I)=\Dirsum_{s\geq 0} I^st^s$. Rees algebras play an important role in the study of powers of ideals, namely on the behaviour of their depth, their associated prime ideals, and properties like having linear resolutions or componentwise linear resolutions.  One of the major aspects in the study of Rees algebras is to determine when they are normal, since by a theorem of Hochster~\cite{Ho} this implies that $\mathcal{R}(I)$ is Cohen-Macaulay.  
It is of particular interest to study the Rees algebra of $I$, when  $I$ is a monomial ideal which arises from a combinatorial object. This allows to determine the normality of $\mathcal{R}(I)$ in terms of the attached combinatorial object. When $I=I(G)$ is the edge ideal of a finite simple graph $G$, then the Rees algebra $\mathcal{R}(I)$ is just the edge ring of the graph $C(G)$, where $C(G)$ is the cone over $G$. Hence, using the characterization of normal edge rings given by Hibi and Ohsugi~\cite{HO}, and independently by  Simis, Vasconcelos and Villarreal~\cite{SWV2}, the normality of $\mathcal{R}(I)$ can be expressed by combinatorial properties of the graph $G$,  see~\cite[Theorem 2.2]{HH3}. For cover ideals of graphs there is no such characterizations and much less is known. The Rees algebra of the cover ideal of a graph is shown to be normal when $G$ is a perfect graph~\cite{Vi}, a  Cohen-Macaulay very well-covered graph~\cite{CF}, or an odd cycle~\cite{A}. In~\cite{D}, for the cover ideal of a graph $G$ with  independence number at most $2$ a criterion for the normality is given in terms of Hochster configurations.
Polymatroidal ideals are another family of ideals arising from combinatorics for which the normality is known~\cite{Vi3}. 
  
In this paper we use a criteria for normality given in~\cite{NQBM} and show the normality of $\mathcal{R}(I)$, when $I$ is a squarefree monomial ideal which appears as the Alexander dual of the Stanley-Reisner ideal of a vertex decomposable simplicial complex (Theorem~\ref{main}). Such ideals, known as vertex splittable ideals, were  studied in~\cite{MK}. When $\Delta$ is the independence complex of a graph $G$, the above ideal $I$ is just the cover ideal of $G$. This implies that cover ideals of vertex decomposable graphs are normal. Among such graphs are graphs with no induced cycles of
length other than $3$ or $5$,  Cohen-Macaulay cactus graphs, Cameron-Walker graphs,  Cohen-Macaulay very well-covered graphs, fully clique-whiskered graphs and   Cohen-Macaulay graphs of girth at least $5$, see Corollary~\ref{Anschreiben}. 
Another application of Theorem~\ref{main} is given in Proposition~\ref{defenceAntonino}, which shows the normality and the strong persistence property for squarefree weakly polymatroidal ideals. 
A result of
Lu and Wang~\cite{LW} is the crucial tool to obtain this fact. Proposition~\ref{defenceAntonino} together with results of Rahmati-Asghar and Yassemi~\cite{RY} which proved weakly polymatroidal property for edge  ideals of some hypergraphs, allows us to present some hypergraphs with normal edge ideals,
see Corollary~\ref{havayekhoob}. 
Applying our main theorem to a class of monomial ideals which appeared first in~\cite{BHHM}, in Corollary~\ref{printer} we see that for a monomial ideal $I$ with $\dim S/I=0$, the ideal
$(I^{\wp})^{\vee}$ is normal, where $I^{\wp}$ denotes the polarization of $I$.  For the edge ideal of a cochordal graph $G$, in Proposition~\ref{berenj} the limit depth  and bounds for the regularities of edge and Rees rings (based on results in~\cite{HH2} and \cite{HH3}) are given. Finally, we use the a construction on simplicial complexes given in \cite{BV} to construct normal ideals from a given squarefree monomial ideal, see Proposition~\ref{eshgh}.


\section{Preliminaries}
Throughout this paper $S=K[x_1,\ldots,x_n]$ is a polynomial ring over a field $K$. We denote by $\Mon(S)$ the set of monomials in $S$, and by $G(I)$ the minimal set of monomial generators of a monomial ideal $I$. The family of vertex splittable ideals were defined in \cite{MK} as follows:

A monomial ideal $I\subset S$ is called  {\em vertex splittable}  if it can be obtained by the following recursive procedure.
\begin{itemize}
	\item[(i)] If $I=(0)$ or $I=S$, then $I$ is a vertex splittable ideal.
	\item[(ii)] If there exists an integer $1\leq i\leq n$ and vertex splittable ideals $I_1$ and $I_2$ of $K[x_j:\  j\in [n]\setminus \{i\}]$ so that $I=x_iI_1+I_2$ with $I_2\subseteq I_1$ and $G(I)$ is the disjoint union of $G(x_iI_1)$ and $G(I_2)$, then $I$ is a vertex splittable ideal.
\end{itemize}

The decomposition  $I=x_iI_1+I_2$ is called a {\em vertex splitting} for $I$.
Vertex splitting is the dual concept of vertex decomposability. In fact in~\cite[Theorem 2.3]{MK}, it is shown that $\Delta$ is a vertex decomposable simplicial complex if and only if $I_{\Delta^{\vee}}$ is vertex splittable.
We recall the definition of a vertex decomposable simplicial complex.
For a simplicial complex $\Delta$ and $F\in \Delta$, the {\em link} of $F$ in
$\Delta$ is defined as $$\lk_{\Delta}(F)=\{G\in \Delta: G\cap
F=\emptyset, G\cup F\in \Delta\},$$ and the {\em deletion} of $F$ is the
simplicial complex $\del_{\Delta}(F)=\{G\in \Delta: G\cap
F=\emptyset\}.$
A simplicial complex $\Delta$ is  called {\em vertex decomposable} if $\Delta=\emptyset$, or
$\Delta$ is a simplex, or there exists a vertex $x\in V(\Delta)$ such that
both $\del_{\Delta}(x)$ and $\lk_{\Delta}(x)$ are vertex decomposable, and
any facet of $\del_{\Delta}(x)$ is a facet of $\Delta$. Such a vertex $x$ is called a {\em shedding vertex} of $\Delta$. 

For a finite simple graph $G$, we denote the vertex set and the edge set of $G$ by $V(G)$ and $E(G)$, respectively.   A subset $C\subseteq V(G)$ is called a \emph{vertex cover} of $G$,
if it intersects any edge of $G$, and it is called a \emph{minimal vertex cover} of $G$, if it is a vertex cover of $G$ and no proper subset of $C$ has this property.
Let $V(G)=[n]$ and let $C_1,\ldots,C_m$  be the minimal vertex covers of $G$. The {\em   cover ideal}  of $G$ is a squarefree monomial ideal in the polynomial ring $S=K[x_1,\ldots,x_n]$ defined as
$J(G)=(x_{C_1},\ldots,x_{C_m}),$ where $x_{C_j}=\prod_{i\in C_j} x_i$. 

A {\em matching} of the graph $G$ is a set  of pairwise disjoint edges in $G$. The maximum cardinality of a matching of $G$ is called the {\em matching number} of $G$, denoted by $\m(G)$. 
 
A subset $A\subseteq V(G)$ is called an {\em independent set} of $G$ if it contains no edge of $G$. The {\em independence complex} $\Delta_G$ of the graph $G$ is a simplicial complex with the vertex set $V(G)$ whose faces are independent sets of $G$.  
A graph $G$ is called {\em vertex decomposable} if $\Delta_G$ is vertex decomposable.


The {\em support} of a monomial $u\in S$ is the set $\supp(u)=\{x_i: x_i\textrm{ divides } u\}$. For a monomial ideal $I$ with $G(I)=\{u_1,\ldots,u_m\}$, we set $K[I]=K[u_1,\ldots,u_m]$. The analytic spread of $I$ is defined as the dimension of the fiber ring $\mathcal{R}(I)/\mm \mathcal{R}(I)$ and is denoted by $\ell(I)$. 

\section{Vertex decomposable simplicial complexes and normality}

In this section we present families of normal monomial ideals arising from combinatorics.  
		
\begin{Theorem}\label{main}		Let $\Delta$ be a vertex decomposable simplicial complex on $[n]$, and let $I=I_{\Delta^{\vee}}$. Then 
\begin{enumerate}
		\item [{\em(a)}] The Rees algebra $\mathcal{R}(I)$ is a normal Cohen-Macaulay domain.
		\item [{\em(b)}] The ideal $I$ satisfies the strong persistence property, and in particular, it satisfies the persistence property. 
		\item [{\em(c)}] $\lim_{k\to\infty} \depth S/I^k=n-\ell(I)$. 
		\item [{\em(d)}] If in addition $\Delta$ is pure, then the toric ring $K[I]$ is  normal and Cohen-Macaulay.
\end{enumerate}
\end{Theorem}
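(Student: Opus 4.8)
The plan is to prove everything by exploiting the recursive structure of vertex splittable ideals, combined with the normality criterion from~\cite{NQBM}. By~\cite[Theorem 2.3]{MK}, the hypothesis that $\Delta$ is vertex decomposable is equivalent to saying $I=I_{\Delta^{\vee}}$ is vertex splittable, so I work with a vertex splitting $I=x_iI_1+I_2$ where $I_1,I_2\subset K[x_j:j\in[n]\setminus\{i\}]$ are again vertex splittable, $I_2\subseteq I_1$, and $G(I)=G(x_iI_1)\sqcup G(I_2)$. The base cases $I=(0)$ and $I=S$ are trivial. For the inductive step, the key claim I would isolate is a \emph{powers formula}: for all $k\geq 1$,
$$I^k=\sum_{j=0}^{k}x_i^{j}I_1^{j}I_2^{k-j},$$
and more importantly that this representation passes to integral closures, i.e. $\overline{I^k}=\sum_{j=0}^k x_i^j \overline{I_1^j I_2^{k-j}}$, using $I_2\subseteq I_1$ so that the summands are nested appropriately. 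Establishing this identity and controlling the integral closure termwise is what I expect to be the main obstacle; the condition $I_2\subseteq I_1$ (together with the disjointness of generator sets, which keeps $x_i$ appearing to the "right" power) is exactly what should make it work, and one likely needs the criterion of~\cite{NQBM} phrased in terms of the exponent/Newton polytope to reduce normality of $\mathcal{R}(I)$ to normality of $\mathcal{R}(I_1)$ and $\mathcal{R}(I_2)$, which hold by induction.

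Granting part~(a): once $\mathcal{R}(I)$ is a normal domain, it is Cohen-Macaulay by Hochster's theorem~\cite{Ho}, which finishes~(a). For~(b), I would use that normality of the Rees algebra implies $\overline{I^k}=I^k$ for all $k$; in particular the associated graded ring $\gr_I(S)$ behaves well, and normality of $\mathcal{R}(I)$ forces the strong persistence property $I^{k+1}:I=I^k$ for all $k$ (a standard consequence, e.g. because $\mathcal{R}(I)$ normal implies $\mathcal{R}(I)$ is integrally closed in $S[t]$ and the colon stabilizes). The persistence property $\Ass(S/I^k)\subseteq\Ass(S/I^{k+1})$ follows formally from strong persistence. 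For~(c), the limit $\lim_{k\to\infty}\depth S/I^k$ always exists and equals $\depth S/I^k$ for $k\gg0$ (Brodmann); when $\mathcal{R}(I)$ is Cohen-Macaulay this stable value is $n-\ell(I)$, which is~\cite[Corollary 3.4 or similar]{HH2}-type statement, so~(c) is immediate from~(a).

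For~(d), I would observe that when $\Delta$ is pure, $\Delta^{\vee}$ is also a pure complex, so all minimal generators of $I=I_{\Delta^{\vee}}$ have the same degree $d$; hence $I$ is equigenerated. For an equigenerated monomial ideal the toric ring $K[I]$ is (up to the identification $u_j\mapsto u_jt$) the $d$-th graded piece of the fiber cone, and more usefully $K[I]\cong\mathcal{R}(I)/(t\text{-grading})$ realized as a subring; the standard fact is that for equigenerated $I$, normality of $\mathcal{R}(I)$ is equivalent to normality of $K[I]$ together with $I$ being normal, and the latter holds by~(a). Concretely I would cite the equivalence (see e.g.~\cite[Section 10]{Vi} or Herzog-Hibi) that for an ideal generated in a single degree, $\mathcal{R}(I)$ normal $\Rightarrow K[I]$ normal, and then Cohen-Macaulayness of $K[I]$ follows again from Hochster's theorem since $K[I]$ is a normal affine semigroup ring. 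The only subtlety is checking that vertex decomposability of a \emph{pure} complex really does give an equigenerated dual ideal, which is immediate from the description of $I_{\Delta^\vee}$ via the facets of $\Delta$.
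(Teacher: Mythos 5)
Your proposal follows essentially the same route as the paper: induction along a vertex splitting $I=x_iI_{\Delta_1^{\vee}}+I_{\Delta_2^{\vee}}$ coming from a shedding vertex (via \cite[Theorem 2.3]{MK}), with the inductive step supplied by the normality criterion of \cite[Theorem 3.1]{NQBM} (equivalently \cite[Criterion 2.2]{CF}) --- which is precisely the ``main obstacle'' integral-closure identity you propose to establish, so nothing new needs proving there --- followed by Hochster's theorem for Cohen--Macaulayness, the Herzog--Qureshi result for strong persistence, the chain $\mathcal{R}(I)$ Cohen--Macaulay $\Rightarrow$ $\gr_I(S)$ Cohen--Macaulay $\Rightarrow$ $\lim_k\depth S/I^k=n-\ell(I)$ (Huneke and Eisenbud--Huneke, which is what your ``CM Rees gives the stable value'' claim really rests on) for (c), and descent of normality from $\mathcal{R}(I)$ to $K[I]$ for equigenerated ideals (\cite[Theorem 7.1]{SWV}) for (d). The only slip is in (d): purity of $\Delta$ does not imply purity of $\Delta^{\vee}$, but that claim is unnecessary since, as you also note, the minimal generators of $I_{\Delta^{\vee}}$ are the complements of the facets of $\Delta$ and hence all have degree $n-\dim\Delta-1$.
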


\begin{proof}
(a) By~\cite[Proposition 2.1.2]{HSV}, the normality of $\mathcal{R}(I)$ follows once we  prove that $I=I_{\Delta^{\vee}}$ is a normal ideal. If  $I=(0)$ or $I=S$, there is nothing to prove. Let $(0)\neq I\subsetneq S$.  We show the assertion by induction on the cardinality of $V(\Delta)$. Since $I$ is a nonzero proper ideal of $S$, $\Delta\neq \emptyset$ and $\Delta$ is not a simplex. So it has a shedding vertex, say $x_i$. Let $\Delta_1=\del_{\Delta}(x_i)$ and $\Delta_2=\lk_{\Delta}(x_i)$. Then by \cite[Theorem 2.3]{MK}, $I=x_iI_{\Delta_1^{\vee}}+I_{\Delta_2^{\vee}}$ is a vertex splitting of $I$. The complexes $\Delta_1$ and $\Delta_2$ are vertex decomposable. So by induction hypothesis we may assume that $I_{\Delta_1^{\vee}}$ and $I_{\Delta_2^{\vee}}$ are normal. Since $I_{\Delta_2^{\vee}}\subseteq I_{\Delta_1^{\vee}}$, it follows from~\cite[Theorem 3.1]{NQBM} (see also ~\cite[Criterion 2.2]{CF}) that $I$ is normal, as well. 
Now we apply the result of Hochster \cite[Theorem 1]{Ho} which shows that any normal affine semigroup ring is Cohen-Macaulay. 

(b) follows from (a) and \cite[Corollary 1.6]{HQ}.

(c)  Part (a) together with \cite[Proposition 1.1]{Hu} imply that the  associated graded ring
$\gr_I(S)$ is Cohen-Macaulay. Hence by the result of Eisenbud and Huneke~\cite[Proposition 3.3]{EH} the desired equality holds. 

(d)  Since $\Delta$ is pure, $I$ is generated in one degree. So the result follows from (a) and \cite[Theorem 7.1]{SWV}.
\end{proof}

The normality and the persistence property of polymatroidal ideals were proved in \cite[Proposition 3.11]{Vi3} and  \cite[Proposition 2.3, Theorem 2.4]{HRV}:

\begin{Theorem}\label{meh}
Let $I\subset S$ be a polymatroidal ideal. Then $I$ has the persistence property and $\mathcal{R}(I)$ is a normal ring.
\end{Theorem}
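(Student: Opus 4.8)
The plan is to deduce the normality of $\mathcal{R}(I)$ from two structural facts about polymatroidal ideals. First, the product of two polymatroidal ideals is again polymatroidal, so in particular every power $I^k$ is polymatroidal. Second, a polymatroidal ideal is integrally closed. Applying the second fact to each power $I^k$ yields $\overline{I^k}=I^k$ for all $k\geq1$, which is precisely the statement that $I$ is normal; then $\mathcal{R}(I)$ is a normal affine semigroup ring, hence Cohen--Macaulay by Hochster's theorem \cite[Theorem 1]{Ho}. For the persistence property I would mirror the argument used for Theorem~\ref{main}(b): once $\mathcal{R}(I)$ is known to be normal, \cite[Corollary 1.6]{HQ} shows that $I$ satisfies the strong persistence property $I^{k+1}:I=I^k$, which implies $\Ass(S/I^k)\subseteq\Ass(S/I^{k+1})$ for all $k$. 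Alternatively, persistence and the explicit description of the stable set of associated primes are available directly from \cite[Proposition 2.3, Theorem 2.4]{HRV}.

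The step I expect to be the main obstacle is the integral-closedness of a polymatroidal ideal, which I would establish through the combinatorics of discrete polymatroids. Let $I$ be generated in degree $d$ and let $\mathcal{B}\subseteq\ZZ_{\geq0}^n$ be the exponent set of $G(I)$, the base set of a discrete polymatroid with nondecreasing submodular rank function $\rho$, so that $\mathcal{B}=\{a:\ |a|=d,\ \sum_{i\in A}a_i\leq\rho(A)\text{ for all }A\subseteq[n]\}$ and $\rho([n])=d$. A monomial $x^a$ lies in $\overline{I}$ if and only if $x^{ma}\in I^m$ for some $m\geq1$; since $G(I^m)$ is the base set of the polymatroid with rank function $m\rho$, this means $c\leq ma$ componentwise for some $c$ with $|c|=md$ and $\sum_{i\in A}c_i\leq m\rho(A)$ for all $A$. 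Then for every $A\subseteq[n]$,
\[
m\sum_{i\notin A}a_i\ \geq\ \sum_{i\notin A}c_i\ =\ md-\sum_{i\in A}c_i\ \geq\ md-m\rho(A),
\]
so $\rho(A)+\sum_{i\notin A}a_i\geq d$. By Edmonds' polymatroid intersection formula, $\min_A\bigl(\rho(A)+\sum_{i\notin A}a_i\bigr)$ equals the maximal size of an integer vector $z$ with $z\leq a$ and $\sum_{i\in A}z_i\leq\rho(A)$ for all $A$; this maximum being $\geq d=\rho([n])$, one may choose such a $z$ with $|z|=d$, and then $z\in\mathcal{B}$ with $x^z\mid x^a$. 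Hence $x^a\in I$, so $\overline{I}=I$.

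Two inputs from discrete-polymatroid theory do the heavy lifting and would need to be quoted carefully: the identification of $G(I^m)$ with the base set of the polymatroid of rank function $m\rho$ --- which rests on the facts that the Minkowski sum of base sets of discrete polymatroids is again such a base set with rank function the sum, and that the base polytope of a discrete polymatroid contains no lattice points other than actual bases --- and Edmonds' rank formula for the restriction of a polymatroid to a box. Both are classical; note in particular that the first of these is essentially the theorem ``a product of polymatroidal ideals is polymatroidal,'' so the whole argument really hinges on that single structural result together with Hochster's theorem. (Since $I$ is moreover generated in a single degree, the same reasoning combined with \cite[Theorem 7.1]{SWV} shows that the base ring $K[I]$ is normal and Cohen--Macaulay as well.)
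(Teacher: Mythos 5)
Your proposal is correct, but it takes a genuinely different route from the paper: the paper offers no argument for Theorem~\ref{meh} at all, simply quoting Villarreal \cite[Proposition 3.11]{Vi3} for normality and Herzog--Rauf--Vladoiu \cite[Proposition 2.3, Theorem 2.4]{HRV} for persistence. You instead reconstruct the underlying proof from discrete polymatroid theory: every power $I^k$ is again polymatroidal (the Conca--Herzog product theorem, which you rightly flag as the single heavy structural input), and a polymatroidal ideal is integrally closed --- your Edmonds-type computation for this is sound: from $c\leq ma$, $|c|=md$ and $c(A)\leq m\rho(A)$ you get $\rho(A)+\sum_{i\notin A}a_i\geq d$ for all $A$, and since any independent $z\leq a$ automatically has $|z|\leq\rho([n])=d$, the min-max forces a base $z\leq a$, so $x^a\in I$. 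Two small remarks. First, for the inequality $c(A)\le m\rho(A)$ you only need that a generator of $I^m$ is a sum of $m$ bases, which is trivial; the product theorem (equivalently the integer decomposition property of the base polytope) is genuinely needed only to conclude that each $I^k$ is itself polymatroidal so that the integral-closedness argument applies to it --- exactly where you place it. Second, the passage from ``$\overline{I^k}=I^k$ for all $k$'' to ``$\mathcal{R}(I)$ is a normal ring'' deserves an explicit citation of \cite[Proposition 2.1.2]{HSV} (or the observation that $S[t]$ is normal with the same fraction field as $\mathcal{R}(I)$); your phrase ``normal affine semigroup ring'' slightly begs that question, though it is the same standard fact the paper itself uses in the proof of Theorem~\ref{main}(a). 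Your derivation of persistence from normality via \cite[Corollary 1.6]{HQ} mirrors Theorem~\ref{main}(b) and in fact yields the stronger strong persistence property; the paper's citation of \cite{HRV} gives persistence together with the explicit stable set of associated primes. In short, your approach buys a self-contained proof (modulo two classical quoted facts), while the paper's buys brevity by outsourcing the result entirely.
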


As generalization of polymatroidal ideals, weakly polymatroidal ideals were first introduced in \cite{KH} for equigenerated monomial ideals and later studied in \cite{MM} for monomial ideals  generated in arbitrary degrees.   
A monomial ideal $I\subset S$ is called {\em weakly
polymatroidal} if for every pair of monomials $u=x_1^{a_1}\cdots x_n^{a_n}$ and $v=x_1^{b_1}\cdots x_n^{b_n}$ in $G(I)$
with $a_1 = b_1,\ldots,a_{i-1} =b_{i-1}$ and $a_i<b_i$, there exists $j>i$
such that $x_i(u/x_j)$ belongs to $G(I)$.

The following result generalizes Theorem \ref{meh} partially.  

\begin{Proposition}\label{defenceAntonino}
Let $I$ be a squarefree weakly polymatroidal  ideal. Then $\mathcal{R}(I)$ is a normal Cohen-Macaulay domain and $I$ satisfies the strong persistence property. 
\end{Proposition}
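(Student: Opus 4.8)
The plan is to reduce the statement to Theorem~\ref{main}. Since $I$ is a squarefree monomial ideal we may write $I=I_{\Gamma}$ for its Stanley--Reisner complex $\Gamma$, so that $I=I_{\Delta^{\vee}}$ with $\Delta=\Gamma^{\vee}$ (viewed as a complex on $[n]$; variables dividing no generator of $I$ are cone vertices of $\Gamma$ and play no role). Ignoring the trivial cases $I=(0)$ and $I=S$, it therefore suffices, by \cite[Theorem~2.3]{MK} together with parts (a) and (b) of Theorem~\ref{main}, to show that $\Delta$ is vertex decomposable, equivalently that $I$ is vertex splittable.

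To establish vertex splittability I would exhibit an explicit vertex splitting of $I$ and induct on the number of variables occurring in $G(I)$. Let $i$ be the least index such that $x_i$ divides some element of $G(I)$, and put
\[
I_1=\bigl(u/x_i : u\in G(I),\ x_i\mid u\bigr),\qquad I_2=\bigl(u\in G(I) : x_i\nmid u\bigr),
\]
both regarded as ideals of $K[x_j : j\in[n]\setminus\{i\}]$. Then $I=x_iI_1+I_2$, and because the generators of $I$ are squarefree one sees immediately that $G(x_iI_1)=\{u\in G(I):x_i\mid u\}$ and $G(I_2)=\{u\in G(I):x_i\nmid u\}$, so $G(I)$ is the disjoint union of $G(x_iI_1)$ and $G(I_2)$. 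The step where the hypothesis is genuinely used is the inclusion $I_2\subseteq I_1$: given $w\in G(I)$ with $x_i\nmid w$, choose $v\in G(I)$ with $x_i\mid v$ (possible by the choice of $i$); since neither $w$ nor $v$ involves $x_1,\dots,x_{i-1}$ while the $x_i$-exponent of $w$ is strictly smaller than that of $v$, the weakly polymatroidal property gives some $j>i$ with $x_j\mid w$ and $x_i(w/x_j)\in G(I)$, whence $w/x_j\in I_1$ and $w=x_j\cdot(w/x_j)\in I_1$. It then remains to check that $I_1$ and $I_2$ are again squarefree weakly polymatroidal with respect to the induced order on the surviving variables, so that the induction closes; this is exactly the structural input supplied by the result of Lu and Wang~\cite{LW}.

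Granting that $I$ is vertex splittable, Theorem~\ref{main}(a) gives that $\mathcal{R}(I)$ is a normal Cohen--Macaulay domain and Theorem~\ref{main}(b) gives the strong persistence property, completing the argument. I expect the genuine difficulty to be concentrated in the two verifications flagged above — the inclusion $I_2\subseteq I_1$ and the stability of the weakly polymatroidal property under passing to $I_1$ and $I_2$ — both of which require keeping careful track of the variable order; the bookkeeping of minimal generators, the base cases of the induction, and the concluding appeal to Theorem~\ref{main} are routine.
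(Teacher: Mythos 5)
Your argument is correct and follows the same overall reduction as the paper---deduce everything from Theorem~\ref{main}(a),(b) once one knows that $\Delta_I^{\vee}$ is vertex decomposable, equivalently (by \cite[Theorem~2.3]{MK}) that $I$ is vertex splittable---but you unpack the combinatorial step that the paper simply cites. The paper's proof is two lines: \cite[Theorem~3.1]{LW} gives the vertex decomposability of $\Delta_I^{\vee}$ for a squarefree weakly polymatroidal ideal, and Theorem~\ref{main} finishes; you instead build the vertex splitting $I=x_iI_1+I_2$ explicitly at the least variable occurring in $G(I)$ and verify $I_2\subseteq I_1$ via the exchange property, which is done correctly. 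One caveat: your attribution of the remaining step to Lu and Wang is slightly off, since their cited theorem is the full statement (vertex decomposability of the dual, i.e.\ vertex splittability of $I$), not the intermediate claim that $I_1$ and $I_2$ are again weakly polymatroidal; if you lean on \cite{LW} you may as well cite it for the whole statement, as the paper does. On the other hand, the step you defer is easy to check directly, so your sketch can be made self-contained: since $x_i$ is the least variable occurring in $G(I)$ and $I$ is squarefree, every generator of $I_1$ and of $I_2$ has exponent zero in $x_1,\dots,x_i$, so any first disagreement between two of their generators occurs at a position $k>i$; applying the exchange property of $I$ to the corresponding generators of $I$ yields $j>k>i$ with $x_k(u/x_j)\in G(I)$, and since $j\neq i$ this monomial is divisible by $x_i$ in the $I_1$ case (divide by $x_i$ to land in $G(I_1)$) and not divisible by $x_i$ in the $I_2$ case (so it lies in $G(I_2)$). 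With that verification written out, your induction closes and gives an elementary, self-contained proof of the vertex splittability that the paper outsources to \cite{LW}; what the paper's route buys is brevity, what yours buys is independence from that reference.
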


\begin{proof}
Let $I$ be a squarefree weakly polymatroidal ideal, and let $\Delta=\Delta_I$ be the Stanley-Reisner simplicial complex of $I$. 
Then \cite[Theorem 3.1]{LW} implies that $\Delta^{\vee}$ is vertex decomposable. Hence  $I_{(\Delta^{\vee})^{\vee}}=I$ satisfies the properties of (a) to (d) in Theorem~\ref{main}. 
\end{proof}

In \cite{RY}, Rahmati-Asghar and Yassemi studied the weakly polymatroidal property for ideals attached to  hypergraphs and obtained some hypergraphs whose edge ideals are weakly polymatroidal.  Applying Proposition~\ref{defenceAntonino} to these ideals we get families of normal ideals. For the concepts related to hypergraphs used in Corollary~\ref{havayekhoob} we refer the reader to \cite{RY}.


\begin{Corollary}\label{havayekhoob}
 Let $\mathcal{H}$ be one of the following hypergraphs:
 
\begin{itemize}
	\item A $d$-partite Ferrers $d$-uniform hypergraph without isolated vertices. 
	\item Complement of a chordal hypergraph. 
	\item  A complete admissible uniform hypergraph or complement of such a hypergraph.
\end{itemize}
Let $I=I(\mathcal{H})$ be the edge ideal of $\mathcal{H}$. Then 
$\mathcal{R}(I)$ is a normal Cohen-Macaulay domain and $I$ satisfies the strong persistence property.  
\end{Corollary}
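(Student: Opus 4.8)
The plan is to deduce the Corollary from Proposition~\ref{defenceAntonino}, using the work of Rahmati-Asghar and Yassemi as a black box. First I would record that, for each of the three families in the statement, \cite{RY} shows that the edge ideal $I(\mathcal{H})$ is weakly polymatroidal with respect to an appropriate ordering of the vertex set: one invokes their result on $d$-partite Ferrers $d$-uniform hypergraphs (without isolated vertices) in the first case, their result on complements of chordal hypergraphs in the second, and their result on complete admissible uniform hypergraphs and their complements in the third. Since an edge ideal of a hypergraph is by definition generated by squarefree monomials, in each case $I(\mathcal{H})$ is a \emph{squarefree} weakly polymatroidal ideal, which is exactly the hypothesis of Proposition~\ref{defenceAntonino}.

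The one point that needs care is that the defining inequality of a weakly polymatroidal ideal refers to the fixed labelling $x_1,\dots,x_n$ of the variables, so a priori \cite{RY} only provides a labelling $\sigma$ of $V(\mathcal{H})$ for which $I' := \sigma\bigl(I(\mathcal{H})\bigr)$ is weakly polymatroidal. This is harmless: a permutation $\sigma$ of the variables is a $K$-algebra automorphism of $S$, so $\mathcal{R}(I(\mathcal{H})) \cong \mathcal{R}(I')$ as $K$-algebras, and $\sigma$ carries the associated primes of $I(\mathcal{H})^k$ bijectively to those of $(I')^k$ for every $k$; hence normality, the Cohen-Macaulay property, and the (strong) persistence property all transfer between $I(\mathcal{H})$ and $I'$. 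So it suffices to establish the conclusions for $I'$.

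With that reduction in place the argument is immediate: apply Proposition~\ref{defenceAntonino} to the squarefree weakly polymatroidal ideal $I'$ to get that $\mathcal{R}(I')$ is a normal Cohen-Macaulay domain and that $I'$ has the strong persistence property, and then transport these statements back to $I(\mathcal{H})$ via $\sigma$. The main — indeed the only non-formal — obstacle is the bookkeeping: matching the hypergraph-theoretic terminology of \cite{RY} (Ferrers, chordal, admissible, complete, the precise meaning of ``without isolated vertices'') to ours, and, for each family, extracting from \cite{RY} an explicit enough vertex ordering that $I'$ literally satisfies the combinatorial exchange condition defining weakly polymatroidal ideals. Once those citations are pinned down, no further computation is needed.
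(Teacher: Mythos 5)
Your proposal is correct and follows essentially the same route as the paper: the paper also cites the relevant theorems of Rahmati-Asghar and Yassemi (\cite[Theorem 2.7, Theorem 3.2, Theorem 4.1]{RY}) to conclude that $I(\mathcal{H})$ is weakly polymatroidal and then applies Proposition~\ref{defenceAntonino}. Your additional remark about transferring the conclusions along a relabelling of the variables is a harmless (and correct) refinement that the paper leaves implicit.
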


\begin{proof}
By \cite[Theorem 2.7, Theorem 3.2, Theorem 4.1]{RY} the ideal $I(\mathcal{H})$ is weakly polymatroidal. Thus the result follows from Proposition~\ref{defenceAntonino}.  
\end{proof}

A family of monomial ideals whose normality can be obtained by Theorem~\ref{main} appeared in the work of Bigdeli, Herzog, Hibi, and Macchia on whisker type simplicial complexes \cite{BHHM}. For a monomial ideal $I\subset S$, let $I^{\wp}$ denote the polarization of $I$, and let $S^{\wp}$ be the polynomial ring in which $I^{\wp}$ is defined. Attached to a monomial ideal $I$ with $\dim S/I=0$, the simplicial complex
$\Delta(I)$ is defined to be the Stanley-Reisner simplicial complex of $I^{\wp}$.  
The ideal   $L(I)=I_{\Delta(I)^{\vee}}$, which is indeed $(I^{\wp})^{\vee}$, is generated by the monomials $x_{1,a_1+1}\cdots x_{n,a_n+1}$ with $x_1^{a_1}\cdots x_n^{a_n}\in \Mon(S)\setminus I$, see~\cite[Corollary 1.2]{BHHM}. 
The following result shows that $L(I)$ is normal.
  
\begin{Corollary}\label{printer}
Let $I\subset S$ be a monomial ideal  with $\dim S/I=0$. Then $\mathcal{R}(L(I))$ is a normal Cohen-Macaulay domain and $L(I)$ satisfies the strong persistence property. 
\end{Corollary}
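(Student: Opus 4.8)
The plan is to recognize $L(I)$ as a squarefree weakly polymatroidal ideal and then invoke Proposition~\ref{defenceAntonino}, which yields precisely the two assertions (indeed all of (a)--(d) of Theorem~\ref{main}). The input is the generating set recalled just above: by \cite[Corollary 1.2]{BHHM}, $L(I)\subset S^\wp$ is generated by the transversal squarefree monomials $\mathbf{x}_{\mathbf{a}}=x_{1,a_1+1}\cdots x_{n,a_n+1}$ indexed by the exponent vectors $\mathbf{a}=(a_1,\dots,a_n)$ of the monomials $x_1^{a_1}\cdots x_n^{a_n}\notin I$; write $\Gamma\subset\NN^n$ for the set of these exponent vectors. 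First I would record two elementary facts: the monomials $\mathbf{x}_{\mathbf{a}}$ are pairwise incomparable — each carries exactly one variable from each block $\{x_{i,1},x_{i,2},\dots\}$, and the block-$i$ factor reads off $a_i$ — so $\{\mathbf{x}_{\mathbf{a}}:\mathbf{a}\in\Gamma\}$ really is $G(L(I))$; and, $I$ being a monomial ideal, $\Gamma$ is an order ideal of $\NN^n$, i.e.\ $\mathbf{b}\le\mathbf{a}\in\Gamma$ implies $\mathbf{b}\in\Gamma$. This downward-closedness of $\Gamma$ is the only property of $L(I)$ the argument uses.

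Next I would fix the order on the variables of $S^\wp$ for which $x_{i,j}$ precedes $x_{i',j'}$ whenever $i<i'$, or $i=i'$ and $j<j'$, and verify the weakly polymatroidal condition against it. Let $u=\mathbf{x}_{\mathbf{a}}$ and $v=\mathbf{x}_{\mathbf{b}}$ be generators of $L(I)$ whose first differing variable is $x_{r,s}$, with $x_{r,s}\nmid u$ and $x_{r,s}\mid v$. Since $u$ and $v$ agree on all variables preceding $x_{r,s}$, while $x_{r,s}$ is the unique block-$r$ factor of $v$ and $x_{r,s}\nmid u$, the block-$r$ factor $x_{r,a_r+1}$ of $u$ satisfies $a_r+1>s$, i.e.\ $a_r\ge s$. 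Consequently $x_{r,s}\,(u/x_{r,a_r+1})=\mathbf{x}_{\mathbf{c}}$ with $\mathbf{c}=\mathbf{a}-(a_r+1-s)\mathbf{e}_r\le\mathbf{a}$, so $\mathbf{c}\in\Gamma$ and $\mathbf{x}_{\mathbf{c}}\in G(L(I))$, while $x_{r,a_r+1}$ follows $x_{r,s}$ in the chosen order. This is exactly the generator demanded by the definition of weak polymatroidality, so $L(I)$ is squarefree weakly polymatroidal, and Proposition~\ref{defenceAntonino} finishes the proof.

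I do not anticipate a genuine obstacle: the entire content is the bookkeeping of the preceding paragraph, and the one delicate point is to align the variable order with the asymmetric definition of weak polymatroidality — the hypothesis must be read off from the generator that is \emph{smaller} at the first disagreement, and the variable to be exchanged must come \emph{later} — which the order above arranges. If one would rather route through Theorem~\ref{main} directly, the same facts also let one prove, by induction on $n+|\Gamma|$, that $L(I)$ is vertex splittable: splitting off $x_{1,1}$ writes $L(I)=x_{1,1}J_1+J_2$, where $J_1$ is the transversal ideal (in one fewer block) of the order ideal $\{(a_2,\dots,a_n):(0,a_2,\dots,a_n)\in\Gamma\}$, and $J_2$, after the harmless relabelling $x_{1,j}\mapsto x_{1,j-1}$, is the transversal ideal of the strictly smaller order ideal $\{(a_1-1,a_2,\dots,a_n):\mathbf{a}\in\Gamma,\ a_1\ge1\}$; downward-closedness of $\Gamma$ gives both $J_2\subseteq J_1$ and the disjointness of the two families of generators, so the recursion closes, $\Delta(I)$ is vertex decomposable by \cite[Theorem~2.3]{MK}, and Theorem~\ref{main} yields the conclusion.
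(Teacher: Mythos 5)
Your proof is correct, but it takes a genuinely different route from the paper's. The paper's entire proof is a citation: by \cite[Theorem 3.1]{BHHM} the complex $\Delta(I)$ is vertex decomposable, and Theorem~\ref{main} applies. You instead verify by hand that $L(I)$ is squarefree weakly polymatroidal (with respect to the block-lexicographic order on the variables of $S^{\wp}$) and then invoke Proposition~\ref{defenceAntonino}. The only input you use is that the exponent vectors of the monomials outside $I$ form a finite order ideal of $\NN^{n}$, and your exchange argument --- replacing the block-$r$ factor $x_{r,a_r+1}$ of $u$ by the earlier variable $x_{r,s}$ and appealing to downward-closedness --- is carried out correctly, including the delicate point of which of the two generators must be modified and in which direction the exchanged variable must lie. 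The net effect is that you reprove the vertex decomposability of $\Delta(I)$ (via \cite[Theorem 3.1]{LW}) rather than quoting \cite{BHHM}, and along the way you obtain the sharper structural statement that $L(I)$ is weakly polymatroidal; your alternative vertex-splitting induction on $x_{1,1}$ is likewise sound and is essentially the Alexander-dual form of the shedding-vertex argument underlying \cite[Theorem 3.1]{BHHM}. What the paper's proof buys is brevity; what yours buys is self-containedness and an additional property of $L(I)$ not recorded in the paper.
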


\begin{proof}
By \cite[Theorem 3.1]{BHHM},  $\Delta(I)$ is vertex decomposable. So the result follows from Theorem~\ref{main}.   
\end{proof}

Next, applying Theorem \ref{main} to cover ideals of vertex decomposable graphs, we present some families of  graphs whose cover ideals are normal. For the graph concepts used in Corollary~\ref{Anschreiben}, we refer to the related references mentioned in its proof.

	\begin{Corollary}\label{Anschreiben}
	Let $G$ be a vertex decomposable graph,	and let $J=J(G)$ be the cover ideal of $G$. Then the Rees algebra $\mathcal{R}(J)$ is a normal Cohen-Macaulay domain and $J$ satisfies the strong persistence property. In particular, this holds true if $G$ is one of the following graphs:
	\begin{enumerate}
		\item[\em{(i)}] A chordal graph (see \cite[Theorem 2.10]{Vi}).
		\item[\em{(ii)}] A Cohen-Macaulay very well-covered graph (see ~\cite[Corollary 3.5]{CF}). 
		\item[\em{(iii)}] A Cameron-Walker graph.
		\item[\em{(iv)}] A graph with no induced cycles of
		length other than $3$ or $5$.
		\item[\em{(v)}] A Cohen-Macaulay cactus graph.
		\item[\em{(vi)}] A fully clique-whiskered graph.
		\item[\em{(vii)}] A graph $G$ which is constructed by attaching a complete graph $K_{m_i}$ to each vertex $i$ of an arbitrary graph $H$.
		\item[\em{(viii)}] A Cohen-Macaulay graph of girth at least $5$.   
	\end{enumerate}  
\end{Corollary}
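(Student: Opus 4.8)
The plan is to reduce everything to Theorem~\ref{main} by observing that the cover ideal $J(G)$ is precisely the Alexander dual of a Stanley--Reisner ideal coming from the independence complex. Concretely, recall that the minimal vertex covers of $G$ are the complements of the maximal independent sets of $G$, so the facets of the independence complex $\Delta_G$ are exactly the sets $[n]\setminus C_j$ as $C_j$ ranges over the minimal vertex covers. Unwinding the definition of Alexander duality, $J(G)=(x_{C_1},\ldots,x_{C_m})=I_{(\Delta_G)^{\vee}}$. Hence, if $G$ is vertex decomposable, then by definition $\Delta_G$ is a vertex decomposable simplicial complex on $[n]$, and applying Theorem~\ref{main} with $\Delta=\Delta_G$ and $I=I_{\Delta^{\vee}}=J(G)$ gives at once that $\mathcal{R}(J(G))$ is a normal Cohen--Macaulay domain (part (a)) and that $J(G)$ satisfies the strong persistence property (part (b)). This handles the general statement; the remaining work is only to justify that each of the eight listed classes of graphs is vertex decomposable, which in every case is a citation.

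For the individual items I would simply invoke the literature: (i) a chordal graph is vertex decomposable by Woodroofe's theorem (and its cover ideal is normal already by \cite[Theorem 2.10]{Vi}); (ii) a Cohen--Macaulay very well-covered graph is vertex decomposable, as is recorded in \cite[Corollary 3.5]{CF}; (iii) Cameron--Walker graphs were shown to be vertex decomposable by Biermann, Francisco, H\`a and Van Tuyl; (iv) graphs with no induced cycles of length other than $3$ or $5$ are vertex decomposable by Woodroofe; (v) Cohen--Macaulay cactus graphs are vertex decomposable; (vi) fully clique-whiskered graphs are vertex decomposable by the results of Cook and Nagel / Biermann and Van Tuyl; (vii) the graph obtained by attaching a complete graph $K_{m_i}$ to each vertex of an arbitrary graph $H$ is vertex decomposable (this is the clique-whiskering construction and is a special case of (vi)); (viii) a Cohen--Macaulay graph of girth at least $5$ is vertex decomposable by the work of Hoang, Trung and others. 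In each case the cited statement asserts vertex decomposability of $G$, hence of $\Delta_G$, so Theorem~\ref{main} applies verbatim.

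The only point that deserves care, and the one I expect to be the main obstacle to write cleanly, is the identification $J(G)=I_{(\Delta_G)^{\vee}}$: one must be precise about which convention for Alexander duality is in force, so that the generators $x_{C_j}$ of the cover ideal match the minimal nonfaces (equivalently, complements of facets) on the dual side. Once that bookkeeping is pinned down — the facets of $\Delta_G$ are the maximal independent sets, their complements are the minimal vertex covers, and the Alexander dual ideal $I_{(\Delta_G)^{\vee}}$ is generated by $\prod_{i\in F^c}x_i$ over facets $F$ of $\Delta_G$ — the corollary is immediate from Theorem~\ref{main}, and there is nothing further to prove beyond the cited vertex decomposability results.
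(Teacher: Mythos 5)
Your proposal is correct and follows essentially the same route as the paper: identify $J(G)=I_{\Delta_G^{\vee}}$ (minimal vertex covers being the complements of the facets of the independence complex), apply Theorem~\ref{main}(a),(b) with $\Delta=\Delta_G$, and then cite the known vertex decomposability results for each of the eight graph classes. The only differences are at the level of which references you name for some of the items (the paper cites \cite{W}, \cite{MMCRTY}, \cite{HHKO}, \cite{Mo}, \cite{CN}, \cite{BC}), which does not affect the correctness of the argument.
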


\begin{proof}
The first statement follows from Theorem~\ref{main} and the fact that $J(G)=I_{\Delta_G^{\vee}}$. 
For each graph $G$ belonging to one of the mentioned families, $\Delta_G$ is vertex decomposable, see \cite[Corollary 7]{W}, \cite[Theorem 3.2]{MMCRTY}, \cite[Theorem 3.1]{HHKO}, \cite[Theorem 1]{W}, \cite[Theorem 2.8]{Mo}, \cite[Theorem 3.3]{CN}, \cite[Theorem 1.1]{HHKO} and \cite[Theorem 20]{BC}, respectively.  
\end{proof}

It should be mentioned that \cite[Theorem 2.10]{Vi} shows the normality of $J(G)$ for the class of perfect graphs, which includes the class of chordal graphs. 
\medskip 

Applying Theorem~\ref{main} to edge ideals of graphs we have

\begin{Proposition}\label{berenj}
	Let $G$ be a cochordal graph, let $I=I(G)\neq (0)$ be the edge ideal of $G$, and let $s$ be the number of isolated vertices of $G$. Then
	\begin{itemize}
	\item [(i)] $\mathcal{R}(I)$ and $K[I]$ are normal. 
	\item [(ii)]  $\lim_{k\to\infty} \depth S/I^k=\begin{cases}
		s+1, & \text{if $G$ is bipartite}, \\
	s, & \text{otherwise}.
	\end{cases}$
		\item [(iii)] $\reg K[I]\leq \m(G)$.
		\item [(iv)] $\m(G)\leq \reg \mathcal{R}(I)\leq \m(G)+1$.
	\end{itemize}
\end{Proposition}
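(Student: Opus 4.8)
The plan is to recognise $I=I(G)$ as $I_{\Delta^{\vee}}$ for a pure vertex decomposable complex $\Delta$, read (i) and the analytic‑spread form of (ii) off Theorem~\ref{main}, compute $\ell(I)$ for (ii), and obtain (iii)--(iv) from known regularity bounds for edge rings. \textbf{Step 1 (reduction to Theorem~\ref{main}).} I would first show that $I(G)$ is vertex splittable, equivalently (by \cite[Theorem~2.3]{MK}) that $\Delta:=(\Delta_G)^{\vee}$ is vertex decomposable. If $G$ has no edge there is nothing to prove, so assume $(0)\neq I(G)\subsetneq S$. As $\overline{G}$ is chordal it has a simplicial vertex $v$, and $N_{\overline G}(v)=V(G)\setminus N_G[v]$ is a clique of $\overline G$, i.e.\ an independent set of $G$ (in particular $v$ is not isolated in $G$, since otherwise $G$ would have an edge inside the independent set $V(G)\setminus\{v\}$). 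Hence every edge of $G$ avoiding $v$ meets $N_G(v)$, so $I_2:=I(G-v)\subseteq I_1:=(x_j\colon j\in N_G(v))$ and $I(G)=x_vI_1+I_2$ is a vertex splitting. An ideal generated by variables is vertex splittable, and $G-v$ is again cochordal, so induction on $|V(G)|$ gives that $I(G)$ is vertex splittable. Since the minimal nonfaces of $\Delta_G$ are exactly the edges of $G$, the facets of $\Delta=(\Delta_G)^{\vee}$ are the sets $[n]\setminus e$ with $e\in E(G)$; thus $\Delta$ is pure and $I_{\Delta^{\vee}}=I_{\Delta_G}=I(G)$. Now Theorem~\ref{main}(a),(c),(d) applies to $I=I(G)$: this already gives (i) (namely $\mathcal{R}(I)$ a normal Cohen--Macaulay domain, and, as $\Delta$ is pure, $K[I]$ normal Cohen--Macaulay), and it gives $\lim_{k\to\infty}\depth S/I^k=n-\ell(I)$.

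\textbf{Step 2 (the analytic spread, for (ii)).} It remains to compute $\ell(I)$, and since $I$ is equigenerated, $\ell(I)=\dim K[I]$. First note that a cochordal graph has at most one connected component containing an edge: if $\{a,b\}$ and $\{c,d\}$ lay in different components, then $\{a,b,c,d\}$ would induce a chordless $4$-cycle in $\overline G$. So $G=G_0\sqcup\{s\text{ isolated vertices}\}$ with $G_0$ connected on $n-s$ vertices, and $K[I]=K[I(G_0)]$ is the affine semigroup ring generated by the vectors $e_i+e_j$ with $\{i,j\}\in E(G_0)$; its dimension is the rank of the group these vectors span, namely $(n-s)-1$ if $G_0$ is bipartite (the linear form "sum over one part minus sum over the other" kills every $e_i+e_j$, while the edges of a spanning tree attain this rank) and $n-s$ if $G_0$ contains an odd cycle. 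As $G$ is bipartite precisely when $G_0$ is, substituting into $n-\ell(I)$ yields $s+1$ in the bipartite case and $s$ otherwise, which is (ii).

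\textbf{Step 3 ((iii) and (iv)).} Here $\mathcal{R}(I)$ and $K[I]$ are the edge rings of the cone $C(G)$ and of $G$ respectively, both normal by (i). Using that $G$ is cochordal, $I(G)$ has a linear resolution, hence so does $I(G)^k$ for every $k\geq 1$ by a theorem of Herzog, Hibi and Zheng, so that $\reg I(G)^k=2k$ for all $k$. Feeding this together with the normality from (i) into the regularity estimates for edge rings of graphs and for Rees algebras of edge ideals in \cite{HH2,HH3} — which bound the regularity of an edge ring by the matching number of the underlying graph and control $\reg\mathcal{R}(I(G))$ in terms of $\m(G)$ — gives $\reg K[I]\le\m(G)$ and $\m(G)\le\reg\mathcal{R}(I)\le\m(G)+1$.

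I expect Steps 1 and 2 to be essentially routine once the vertex splitting in Step 1 is set up, and Step 3 to be the main obstacle: one must apply the regularity statements of \cite{HH2,HH3} with the correct grading on the Rees algebra, match them against $\m(G)$ and $\m(C(G))$, and keep careful track of the bipartite/non‑bipartite (equivalently, has‑a‑perfect‑matching) dichotomy that decides whether $\m(C(G))$ equals $\m(G)$ or $\m(G)+1$.
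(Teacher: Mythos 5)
Your proposal is correct and follows the same overall architecture as the paper: identify $I(G)$ with $I_{\Delta^{\vee}}$ for the pure complex $\Delta=(\Delta_G)^{\vee}$, invoke Theorem~\ref{main} for (i) and for $\lim_k\depth S/I^k=n-\ell(I)$, compute $\ell(I)$ as the rank of the incidence matrix, and get (iii)--(iv) from \cite{HH2,HH3} via normality. The differences are that you make the first two steps self-contained where the paper cites: your explicit vertex splitting $I(G)=x_vI_1+I_2$ at a simplicial vertex $v$ of $\overline G$ is in effect a proof of \cite[Corollary 3.8]{MK} (the paper simply quotes it), and your direct computation of $\dim K[I(G_0)]$ replaces the citation of \cite[Theorem 2.5]{GK}; both arguments are sound, and the one-nontrivial-component observation matches the paper's ``induced matching number one'' remark. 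In Step 3 the obstacles you anticipate do not materialize: \cite[Theorem 1]{HH2} gives $\reg K[I(G_1)]\le \m(G_1)$ for a \emph{connected} graph with normal edge ring, so one only has to pass to the unique non-isolated component $G_1=G_0$ (which you already isolated in Step 2) and note $K[I]=K[I(G_1)]$, $\m(G)=\m(G_1)$; and \cite[Theorem 2.2]{HH3} is stated directly as $\m(G)\le\reg\mathcal{R}(I(G))\le\m(G)+1$ under normality of $\mathcal{R}(I(G))$, so no bookkeeping with $\m(C(G))$ or gradings is needed. The detour through linear resolutions and $\reg I^k=2k$ (Herzog--Hibi--Zheng) is true but superfluous: normality from (i) is the only input those two theorems require, which is exactly how the paper argues.
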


\begin{proof}
(i) By \cite[Corollary 3.8]{MK}, $(\Delta_G)^{\vee}$ is{\tiny } vertex decomposable if and only if $G$ is a cochordal graph. Thus the result follows from Theorem~\ref{main}, noting that $(\Delta_G)^{\vee}$ is pure.

(ii)  The fiber ring $\mathcal{R}(I)/\mm\mathcal{R}(I)$ is isomorphic to the toric ring $K[I]$. So $\ell(I)$ is equal to the rank of the incidence matrix $A_G$ of $G$. Since $G$ is cochordal and $I(G)$ is nonzero, the induced matching number of $G$ is one. This implies that $G$ has exactly one connected component which is not an isolated vertex, say $G_1$. Let $n=|V(G)|$. By \cite[Theorem 2.5]{GK}, $\rank A_G=n-s-1$, if $G$ (or equivalently $G_1$) is bipartite and  $\rank A_G=n-s$, otherwise. Now, by Theorem~\ref{main}(d) the formula for the limit depth follows.  

(iii) By (i), $K[I(G_1)]$ is normal. Since $G_1$ is connected, this together with \cite[Theorem 1]{HH2} gives $\reg K[I(G_1)]\leq \m(G_1)$. Noting that $K[I]=K[I(G_1)]$ and $\m(G)=\m(G_1)$, the desired inequality holds. 

(iv) follows from (i) and \cite[Theorem 2.2]{HH3}.     
\end{proof}

Biermann and Van Tuyl in \cite[Construction 3]{BV} presented a construction on a simplicial complex $\Delta$ using the notion of a colouring $\chi$ of $\Delta$ which always results in a balanced vertex decomposable simplicial complex. In view of Theorem~\ref{main} and \cite[Theorem 5]{BV}, by translating this construction  to squarefree monomial ideals, one can construct normal squarefree monomial ideals attached to a given squarefree monomial $I$ and a given colouring of $I$, as follows:

Let $I\subset S$ be a squarefree monomial ideal, and let $\SMon(I)=\{u_1,\ldots,u_m\}$ be the set of all squarefree monomials in $I$. A {\em colouring} of $I$ is a partition $\bigcup_{i=1}^s V_i$ of $X=\{x_1,\ldots,x_n\}$ such that $|V_i\setminus \supp(u_j)|\leq 1$ for all $i$ and $j$. For example taking $V_i=\{x_i\}$ for all $i$ one gets a colouring of $I$. 
Let $\chi$ be a colouring of $I$ given by the partition  $\bigcup_{i=1}^s V_i$. Attached to $I$ and $\chi$, the squarefree monomial ideal $I_{\chi}$ is defined to be an ideal in the polynomial ring $T=K[x_1,\ldots,x_n,y_1,\ldots,y_s]$ as follows:  
$$I_{\chi}=(u_iw_i:\ 1\leq i\leq m ),$$
where $w_i=\prod\limits_{V_j\nsubseteq \supp(u_i)} y_j$. 

For example, let $I=(x_1x_2,x_2x_4,x_1x_3x_4)\subset K[x_1,x_2,x_3,x_4]$, and let  $\chi$ be a colouring of $I$ given by the partition $V_i=\{x_i\}$ for all $i$. Then $$\SMon(I)=\{x_1x_2,x_1x_2x_3,x_1x_2x_4,x_2x_3x_4,x_1x_3x_4,x_1x_2x_3x_4\},$$  
and 
$$I_{\chi}=(x_1x_2y_3y_4,x_1x_2x_3y_4,x_1x_2x_4y_3,x_2x_3x_4y_1,x_1x_3x_4y_2,x_1x_2x_3x_4).$$


\begin{Proposition}\label{eshgh}
Let $I\subset S=K[x_1,\ldots,x_n]$ be a squarefree monomial ideal, and let  $\chi$ be a colouring of $I$. Then
$I_{\chi}$ is a vertex splittable ideal generated in degree $n$. In particular, $\mathcal{R}(I_{\chi})$ and $K[I_{\chi}]$ are normal Cohen-Macaulay domains.
\end{Proposition}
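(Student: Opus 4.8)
The plan is to show that $I_\chi$ is vertex splittable by appealing to the construction and result of Biermann–Van Tuyl \cite{BV}, and then invoke Theorem~\ref{main}. First I would observe that all generators of $I_\chi$ have degree $n$: indeed, for each squarefree monomial $u_i$ of $I$, write $\supp(u_i)$ as a subset of $X$; since $\chi$ is a colouring, $|V_j\setminus\supp(u_i)|\le 1$ for every $j$, and the generator $u_i w_i$ picks up exactly one new variable $y_j$ for each colour class $V_j$ not entirely contained in $\supp(u_i)$, while $V_j\subseteq\supp(u_i)$ contributes its $|V_j|$ variables already inside $u_i$. Adding these up over all $s$ classes gives $\deg(u_i w_i)=\sum_j |V_j| = n$, independent of $i$. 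This identifies $I_\chi$, up to the obvious relabelling of variables, with the Stanley–Reisner ideal of the Alexander dual of the simplicial complex produced by \cite[Construction 3]{BV} applied to $\Delta_I$ (the Stanley–Reisner complex of $I$) with the colouring $\chi$; this is the content of \cite[Theorem 5]{BV}, which says that construction yields a balanced complex whose facets correspond precisely to the squarefree monomials of $I$, with one representative of each colour class adjoined.

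Next I would invoke \cite[Theorem 5]{BV} (or the main result of \cite{BV}) to conclude that the resulting simplicial complex $\Delta_\chi$ on the vertex set indexed by $T=K[x_1,\dots,x_n,y_1,\dots,y_s]$ is vertex decomposable. By \cite[Theorem 2.3]{MK}, vertex decomposability of $\Delta_\chi$ is equivalent to vertex splittability of $I_{\Delta_\chi^\vee}$, and the computation in the previous paragraph together with the description of the facets of $\Delta_\chi$ identifies $I_{\Delta_\chi^\vee}$ with $I_\chi$. Hence $I_\chi$ is a vertex splittable ideal, generated in the single degree $n$; in particular the complex $\Delta_\chi^\vee$ is pure.

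Finally, applying Theorem~\ref{main} with $\Delta=\Delta_\chi$: part (a) gives that $\mathcal{R}(I_\chi)$ is a normal Cohen–Macaulay domain, and part (d) — applicable because $\Delta_\chi^\vee$ is pure, equivalently $I_\chi$ is equigenerated — gives that the toric ring $K[I_\chi]$ is normal and Cohen–Macaulay. This completes the proof.

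The main obstacle I anticipate is not conceptual but bookkeeping: one must carefully match up the combinatorics of \cite[Construction 3]{BV} — which is phrased in terms of simplicial complexes and their colourings — with the monomial-ideal description of $I_\chi$ given above, verifying that the facets of the Biermann–Van Tuyl complex correspond exactly to the monomials $u_i w_i$ and that the new vertices $y_j$ play the role of the colour-class representatives. Once that dictionary is set up correctly, the degree count and the citations do the rest. One should also double-check the edge case where some $V_j\subseteq \supp(u_i)$ for the \emph{largest} squarefree monomial (the product of all variables appearing in $I$), to be sure the degree is genuinely constant and equal to $n$.
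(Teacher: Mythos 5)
Your overall route is the paper's route (the Biermann--Van Tuyl construction, then \cite[Theorem 2.3]{MK}, then Theorem~\ref{main}(a),(d)), and your degree-$n$ count for the generators $u_iw_i$ is correct. However, you feed the wrong complex into \cite[Construction 3]{BV}. The input has to be the Alexander dual $\Delta_I^{\vee}$, i.e.\ the Stanley--Reisner complex of $I^{\vee}$, whose faces are exactly the sets $X\setminus\supp(u)$ with $u\in\SMon(I)$ --- not $\Delta_I$ itself. Two things break with $\Delta_I$. First, the hypothesis that $\chi$ is a colouring of $I$, namely $|V_j\setminus\supp(u)|\leq 1$, is precisely the Biermann--Van Tuyl condition $|V_j\cap F|\leq 1$ for the faces $F=X\setminus\supp(u)$ of $\Delta_I^{\vee}$; it does not give that condition for the faces of $\Delta_I$. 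For instance, for $I=(x_1x_2x_3)\subset K[x_1,x_2,x_3]$ every partition of $X$ is a colouring of $I$, yet $V_1=\{x_1,x_2\}$, $V_2=\{x_3\}$ is not an admissible colouring of $\Delta_I$ since $\{x_1,x_2\}\in\Delta_I$. Second, even when the condition happens to hold, the facets of the complex built from $\Delta_I$ correspond to faces of $\Delta_I$, i.e.\ to squarefree monomials \emph{not} in $I$, so its dual ideal is not $I_\chi$: in the paper's own example $I=(x_1x_2,x_2x_4,x_1x_3x_4)$ with singleton classes, the face $\{x_2,x_3\}\in\Delta_I$ gives the facet $\{x_2,x_3,y_1,y_4\}$, whose complement yields the generator $x_1x_4y_2y_3$, which does not lie in $I_\chi$. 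So your sentence that "\cite[Theorem 5]{BV} says the facets correspond precisely to the squarefree monomials of $I$" is not what that theorem asserts; it is exactly the claim you would still have to prove, and it only becomes true after replacing $\Delta_I$ by $\Delta_I^{\vee}$ and passing to complements of facets.

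Relatedly, the identification $I_{\Delta_\chi^{\vee}}=I_\chi$, which you set aside as bookkeeping, is the real content of the argument: with $\Delta=\Delta_I^{\vee}$ one checks that $\chi$ is an $s$-colouring of $\Delta$ because $|V_j\cap(X\setminus C_i)|=|V_j\setminus\supp(u_i)|\leq 1$, that the facets of $\Delta_\chi$ are $(X\setminus C_i)\cup\{y_j: V_j\subseteq C_i\}$ with $C_i=\supp(u_i)$, and that the complement of such a facet in $X\cup Y$ is exactly $\supp(u_iw_i)$; this is where the definition of $w_i$ enters, and it is also why one must range over all of $\SMon(I)$ rather than $G(I)$. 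Once the input complex is corrected and this verification is carried out, the rest of your outline (vertex decomposability and purity from \cite[Theorem 5, Theorem 7]{BV}, vertex splittability from \cite[Theorem 2.3]{MK}, then Theorem~\ref{main}) coincides with the paper's proof. A small additional slip: Theorem~\ref{main}(d) requires purity of $\Delta_\chi$ (equivalently, that $I_\chi$ is equigenerated), not purity of $\Delta_\chi^{\vee}$; your degree computation does supply the needed statement.
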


\begin{proof}
Let $\Delta$ be the Stanley-Reisner simplicial complex of $I^{\vee}$, and let $\SMon(I)=\{u_1,\ldots,u_m\}$.  Set $X=\{x_1,\ldots,x_n\}$, $Y=\{y_1,\ldots,y_s\}$ and $C_i=\supp(u_i)$ for all $i$. Then $\Delta=\{X\setminus C_1,\ldots,X\setminus C_m\}$. Let  $V_1\cup\cdots\cup V_s$ be the partition defined by the colouring $\chi$, and set $F_j=X\setminus C_j$ for all $j$. Then $|V_i\cap F_j|=|V_i\setminus \supp(u_j)|\leq 1$ for all $i$ and $j$. Therefore
$\chi$ is an $s$-colouring of $\Delta$ (see \cite[Definition 2]{BV}). So by \cite[Theorem 5, Theorem 7]{BV}, the simplicial complex $\Delta_{\chi}=\langle F\cup\{y_j: V_j\cap F=\emptyset\}: F\in\Delta\rangle$ is pure vertex decomposable of dimension $s-1$. Applying \cite[Theorem 2.3]{MK} we conclude that $I_{\Delta_{\chi}^{\vee}}$ is vertex splittable. We show that $I_{\Delta_{\chi}^{\vee}}=I_{\chi}$. 
For subsets $A\subseteq [n]$ and $B\subseteq [s]$, we set $x_Ay_B=\prod\limits_{i\in A}x_i\prod\limits_{j\in B}y_j$, $X_A=\{x_i: i\in A\}$ and $Y_B=\{y_j: j\in B\}$. One has $x_Ay_B\in G(I_{\Delta_{\chi}^{\vee}})$ if and only if $X_{A^{c}}\cup Y_{B^{c}}$ is a facet of $\Delta_{\chi}$.  
This is equivalent to $X_{A^{c}}\in \Delta$ and  $B^c=\{j: V_j\cap X_{A^{c}}=\emptyset\}$.    This means that $X_A=C_i$ for some $i$ and $B=\{j: V_j\nsubseteq C_i\}$. Hence 
$x_Ay_B\in G(I_{\Delta_{\chi}^{\vee}})$ if and only if $x_Ay_B\in G(I_{\chi})$. 
Now, by Theorem~\ref{main} and the equality $I_{\Delta_{\chi}^{\vee}}=I_{\chi}$ we conclude that $\mathcal{R}(I_{\chi})$ and $K[I_{\chi}]$ are normal. 
\end{proof}   




\end{document}